\providecommand{\U}[1]{\protect\rule{.1in}{.1in}}
\newtheorem{theorem}{Theorem}[section]
\numberwithin{equation}{section}
\begin{document}
\title{A note on the geometry of certain classes of linear operators}

\begin{abstract}
In this note we introduce a new technique to answer an issue posed in
\cite{FPT} concerning geometric properties of the set of non-surjective linear
operators. We also extend and improve a related result from the same paper.

\end{abstract}
\author[D. Diniz]{Diogo Diniz}
\address{Unidade Acad\^{e}mica de Matem\'{a}tica e Estat\'{\i}stica \\
Universidade Federal de Campina Grande \\
58109-970 - Campina Grande, Brazil.}
\email{diogo@mat.ufcg.edu.br and diogodpss@gmail.com}
\author[A. Raposo Jr.]{Anselmo Raposo Jr.}
\address{Departamento de Matem\'{a}tica \\
Universidade Federal do Maranh\~{a}o \\
65085-580 - S\~{a}o Lu\'{\i}s, Brazil.}
\email{anselmo.junior@ufma.br}
\thanks{D. Diniz was partially supported by CNPq 301704/2019-8 and Grant 2019/0014
Paraiba State Research Foundation (FAPESQ)}
\keywords{Spaceability; lineability; sequence spaces}
\subjclass[2020]{46B87, 15A03, 47B37, 47L05}
\maketitle

\section{Introduction}

In 1872 K. Weierstrass constructed an example af a nowhere differentiable
continuous function from $[0,1]$ on $\mathbb{R}$. This non intuitive result,
now known as Weierstrass Monster, was pushed further in 1966, when V. Gurariy
constructed an infinite-dimensional subspace formed, except for the null
vector, by continuous nowhere differentiable functions. In 2004, Aron, Gurariy
and Seoane \cite{AGSS} investigated similar problems in other settings,
initiating the field of research known as \textquotedblleft
lineability\textquotedblright: the idea is to look for linear structures
inside exotic subsets of vector spaces. If $V$ is a vector space and $\alpha$
is a cardinal number, a subset $A$ of $V$ is called $\alpha\text{-lineable}$
in $V$ if $A\cup\left\{  0\right\}  $ contains an $\alpha$-dimensional linear
subspace $W$ of $V$. When $V$ has a topology and the subspace $W$ can be
chosen to be closed, we say that $A$ is spaceable. We refer to the book
\cite{book} for a general panorama of the subject.

As a matter of fact, with the development of the theory, it was observed that
positive results of lineability were quite common, although general techniques
are, in general, not available. Towards a more demanding notion of linearity,
F\'{a}varo, Pellegrino and Tomaz introduced a more involved geometric concept:
let $\alpha$, $\beta$ and $\lambda$ be cardinal numbers, with $\alpha
<\beta\leq\lambda$, and let $V$ be a vector space such that $\dim V=\lambda$.
A subset $A$ of $V$ is called $\left(  \alpha,\beta\right)  $-lineable if, for
every subspace $W_{\alpha}\subset V$ such that $\dim W_{\alpha}=\alpha$ and
$W_{\alpha}\subset A\cup\left\{  0\right\}  $\ there is a subspace $W_{\beta
}\subset V$ with $\dim W_{\beta}=\beta$ and $W_{\alpha}\subset W_{\beta
}\subset A\cup\left\{  0\right\}  $. When $V$ is a topological vector space,
we shall say that $A$ is $\left(  \alpha,\beta\right)  $-spaceable when the
subspace $W_{\beta}$ can be chosen to be closed .

A well-known technique in lineability is known as \textquotedblleft mother
vector technique\textquotedblright: it consists of choosing a vector $v$ in
the set $A$ and generating a subspace $W\subset A\cup\{0\}$ containing
\textquotedblleft copies\textquotedblright\ of $v$. However, in general, the
vector $v$ does not belong to the generated subspace (see, for instance,
\cite{PT}). Constructing a vector space of prescribed dimension and containing
an arbitrary given vector is a rather more involving problem, which is
probably another motivation of this more strict approach to lineability.

Under this new perspective, several simple problems, from the point of view of
ordinary lineability, gain more subtle contours. For instance, it is obvious
that the set of the continuous linear operators $u\colon\ell_{p}%
\rightarrow\ell_{q}$ that are non-surjective is $\mathfrak{c}$-spaceable (here
and henceforth $\mathfrak{c}$ denotes the continuum). In fact, if $\pi
_{1}\colon\ell_{q}\rightarrow\mathbb{K}$ is the projection at the first
coordinate, just consider the colection of the continuous and non-surjective
linear operators for which $\pi_{1}\circ u\equiv0$. Therefore, only $\left(
\alpha,\beta\right)  $-lineability matters in this framework.

In this note we answer a question posed in \cite{FPT} on the $\left(
1,\mathfrak{c}\right)  $-lineability of a certain set of non surjective
functions. Our solution uses a technique that, to the best of the authors
knowledge, is new.

\section{Lineability vs injectivity and surjectivity}

Lineability properties of the sets of injective and surjective continuous
linear operators between classical sequence spaces were recently investigated
by \cite{ABGJRMFSS} and \cite{DFPR}. In \cite[Theorem 3.1]{FPT} the authors
investigated more subtle geometric properties in the setting of non injective
continuous linear operators by proving that if $p,q\geq1$ and
\begin{equation}
A:=\left\{  u\colon\ell_{p}\rightarrow\ell_{q}:u\text{ is linear, continuous
and non injective}\right\}  \text{,}\label{c1c1c1}%
\end{equation}
then $A$ is $\left(  1,\mathfrak{c}\right)  $-lineable. In the same paper the
authors pose a question on the $\left(  1,\mathfrak{c}\right)  $-lineability
of the set
\begin{equation}
D:=\left\{  u\colon\ell_{p}\rightarrow\ell_{q}:u\text{\ is linear, continuous
and non-surjective}\right\}  \text{.}\label{a1a1a1}%
\end{equation}
In this section we shall show that $D$ is $\left(  1,\mathfrak{c}\right)
$-lineable and, as a matter of fact, our technique works in a more general
environment of sequence spaces. We shall say that a Banach sequence space $E$
of $X$-valued sequences where $X$ is a Banach space is reasonable if
$c_{00}\left(  X\right)  \subset E$ and, for all $x=\left(  x_{j}\right)
_{j=1}^{\infty}\in E$ and $\left(  \alpha_{j}\right)  _{j=1}^{\infty}\in
\ell_{\infty}$, we have $\left(  \alpha_{j}x_{j}\right)  _{j=1}^{\infty}\in E$
with
\begin{equation}
\left\Vert \left(  \alpha_{j}x_{j}\right)  _{j=1}^{\infty}\right\Vert
\leq\left\Vert \left(  \alpha_{j}\right)  _{j=1}^{\infty}\right\Vert _{\infty
}\left\Vert \left(  x_{j}\right)  _{j=1}^{\infty}\right\Vert \text{.}%
\label{b1b1b1}%
\end{equation}

The class of reasonable sequence spaces includes various classical sequence
spaces. For instance, for $1<p<\infty$, the $\ell_{p}\left(  X\right)  $
spaces of $p$-summable sequences, the $\ell_{p}^{w}\left(  X\right)  $ spaces
of weakly $p$-summable sequences and the $\ell_{p}^{u}\left(  X\right)  $
spaces of unconditionally $p$-summable sequences are reasonable sequence
spaces. The spaces $\ell_{\infty}\left(  X\right)  $, $c_{0}\left(  X\right)
$, of bounded and null sequences, respectively and the Lorentz spaces
$\ell_{\left(  w,p\right)  }\left(  X\right)  $ are also reasonable sequence spaces.

Our result reads as follows:

\begin{theorem}
\label{Teo3.1}Let $V\neq\left\{  0\right\}  $ be a normed vector space and
$X\neq\left\{  0\right\}  $ be a Banach space. Let $E$ be a reasonable
sequence space of $X$-valued sequences. The set%
\[
D_{V,E}=\left\{  u\colon V\rightarrow E:u\text{\ is linear, continuous and
non-surjective}\right\}
\]
is $\left(  1,\mathfrak{c}\right)  $-lineable.
\end{theorem}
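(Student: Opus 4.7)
The plan is to parametrize candidate operators by bounded scalar sequences acting coordinate-wise on the target side of $u_0$. Fix a non-zero $u_0 \in D_{V,E}$. For each $\beta = (\beta_j)_{j=1}^{\infty} \in \ell_\infty$ I would define $T_\beta \colon V \to E$ by
\[
T_\beta(v) := \bigl(\beta_j (u_0(v))_j\bigr)_{j=1}^{\infty}.
\]
The reasonable property \eqref{b1b1b1} gives $T_\beta(v) \in E$ with $\|T_\beta(v)\| \leq \|\beta\|_\infty \|u_0(v)\|$, so $T_\beta \in L(V,E)$, and the assignment $\beta \mapsto T_\beta$ is a linear map $\ell_\infty \to L(V,E)$ satisfying $T_{\mathbf{1}} = u_0$ for the constant sequence $\mathbf{1}$.

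The heart of the proof is the claim that $T_\beta$ is non-surjective for \emph{every} $\beta \in \ell_\infty$. I would argue by contradiction. If $\beta_{j_0} = 0$ for some index $j_0$, then the $j_0$-th coordinate of every $T_\beta(v)$ is zero; taking $x_0 \in X \setminus\{0\}$, the element of $c_{00}(X) \subset E$ with $x_0$ in position $j_0$ and zero elsewhere has non-zero $j_0$-th coordinate, contradicting surjectivity. Hence $\beta_j \neq 0$ for all $j$. For any $w \in E$, the reasonable property yields $(\beta_j w_j)_{j=1}^{\infty} \in E$, and surjectivity of $T_\beta$ produces $v \in V$ with $T_\beta(v) = (\beta_j w_j)_{j=1}^{\infty}$; cancelling the non-zero $\beta_j$ coordinate-wise gives $u_0(v) = w$, so $u_0$ is surjective, contradicting $u_0 \in D_{V,E}$.

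Set $W_1 := \{T_\beta : \beta \in \ell_\infty\}$, a subspace of $L(V,E)$ containing $u_0$ whose non-zero elements are all non-surjective. The kernel of $\beta \mapsto T_\beta$ consists of the sequences vanishing on $J := \{j \in \mathbb{N} : (u_0(v))_j \neq 0 \text{ for some } v \in V\}$, so $\dim W_1 = \dim \ell_\infty(J)$. If $J$ is infinite we have $\dim W_1 = \mathfrak{c}$ and we take $W := W_1$. Otherwise $J$ is finite and, picking any $j_0 \notin J$, the closed hyperplane $F := \{x \in E : x_{j_0} = 0\}$ contains $u_0(V)$; the subspace $W_2 := \{T \in L(V,E) : T(V) \subseteq F\}$ then contains $u_0$, consists of non-surjective operators, and, fixing $\varphi \in V^*\setminus\{0\}$ (Hahn--Banach), the injection $F \hookrightarrow W_2$, $y \mapsto (v \mapsto \varphi(v) y)$ shows $\dim W_2 \geq \dim F \geq \mathfrak{c}$ (the latter because $F$ is an infinite-dimensional Banach space). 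We then extract a $\mathfrak{c}$-dimensional subspace of $W_2$ containing $u_0$.

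I expect the main obstacle to be the non-surjectivity claim for $T_\beta$: the argument must convert a hypothetical surjectivity of $T_\beta$ into surjectivity of $u_0$ itself, and it is precisely there that both halves of the definition of ``reasonable''---the inclusion $c_{00}(X) \subset E$ (to rule out vanishing coordinates of $\beta$) and the multiplication bound \eqref{b1b1b1} (to ensure $(\beta_j w_j)_j \in E$ for every $w \in E$)---are used together.
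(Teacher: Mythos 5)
Your proposal is correct, and its core coincides with the paper's: the multiplier operators $T_\beta(v)=(\beta_j(u_0(v))_j)_j$ for $\beta\in\ell_\infty$ are exactly the paper's operators $S^{v}_{(\alpha_n)}$, and your non-surjectivity dichotomy (some $\beta_{j_0}=0$, so the range misses a suitable vector of $c_{00}(X)$; versus all $\beta_j\neq 0$, so surjectivity of $T_\beta$ would transfer to $u_0$ via \eqref{b1b1b1}) is precisely the paper's argument. Where you genuinely diverge is in the case analysis and the degenerate case. The paper splits on whether $\mathbb{N}_{u_0}$ (your $J$) equals $\mathbb{N}$: only when $J=\mathbb{N}$ does it use the multiplier family, and for any $J\neq\mathbb{N}$ it works instead inside the coordinate-killing subspace $N=\{u:\pi_{j_0}\circ u\equiv 0\}$, bounding $\dim N\geq\mathfrak{c}$ by an injective $\ell_1$-summation of rank-one operators $T_k\circ u_0$ built from a Hahn--Banach functional on $E$. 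You split on whether $J$ is infinite --- exactly the condition under which the multiplier family already has dimension $\mathfrak{c}$, since its kernel is $\{\beta:\beta|_J=0\}$ --- and fall back to the coordinate-killing subspace only for finite $J$, using the simpler rank-one family $v\mapsto\varphi(v)y$ with $\varphi\in V^{*}$ and $y\in F$. Both organizations cover all cases; yours extracts more from the multiplier construction (handling infinite $J\neq\mathbb{N}$ with it) and avoids the summation argument entirely. Two small points to tidy: $F=\{x\in E: x_{j_0}=0\}$ is in general not a hyperplane (its codimension can be as large as $\dim X$), and the closedness of $F$, which you need in order to invoke the fact that an infinite-dimensional Banach space has Hamel dimension at least $\mathfrak{c}$, should be justified --- it follows from \eqref{b1b1b1} applied to the indicator sequence of $\{j_0\}$, which shows that $x\mapsto(0,\ldots,0,x_{j_0},0,\ldots)$ is a continuous operator on $E$ with kernel $F$.
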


\section{The proof}

Fixed $v\in D_{V,E}\backslash\left\{  0\right\}  $, let
\[
\mathbb{N}_{v}=\left\{  k\in\mathbb{N}:\pi_{k}\circ v\not \equiv 0\right\}
\text{,}%
\]
where%
\begin{align*}
\pi_{k}\colon E &  \rightarrow X\\
\left(  x_{j}\right)  _{j=1}^{\infty} &  \mapsto x_{k}%
\end{align*}
is the $k$-th projection over $X$. If $\mathbb{N}_{v}$ is a proper subset of
$\mathbb{N}$, the proof is simple. In fact, if $j_{0}\in\mathbb{N}%
\backslash\mathbb{N}_{v}$, since $c_{00}(X)\subset E$, it is obvious that the
subspace%
\[
N:=\left\{  u\colon V\rightarrow E:u\text{ is linear, continuous and }%
\pi_{j_{0}}\circ u\equiv0\right\}
\]
is contained in $D_{V,E}$ and it is also plain that $v\in N$. We will prove
that $\dim N\geq\mathfrak{c}$. Since $v$ is not identically zero, there exists
$x_{0}\in V$ such that $v\left(  x_{0}\right)  =w_{0}\neq0$. By the
Hahn-Banach Theorem, there is a continuous linear functional $\varphi\colon
E\rightarrow\mathbb{K}$, such that $\varphi\left(  w_{0}\right)  =1$. Fixing
$a\in X\backslash\left\{  0\right\}  $, for each $k\in\mathbb{N}$, let us
define%
\[
w_{k}=(\underset{j_{0}+k-1}{\underbrace{0,\ldots,0}},a,0,0,\ldots)\in E
\]
and consider the linear operators $T_{k}\colon E\rightarrow E$ given by%
\[
T_{k}\left(  w\right)  =\varphi\left(  w\right)  \dfrac{w_{k}}{\left\Vert
w_{k}\right\Vert }\text{.}%
\]
Obviously, the operators $T_{k}$ are continuous and%
\[
\left\Vert T_{k}\right\Vert =\sup_{\left\Vert w\right\Vert \leq1}\left\Vert
\varphi\left(  w\right)  \dfrac{w_{k}}{\left\Vert w_{k}\right\Vert
}\right\Vert =\sup_{\left\Vert w\right\Vert \leq1}\left\vert \varphi\left(
w\right)  \right\vert =\left\Vert \varphi\right\Vert \text{.}%
\]
Hence, the operators $S_{k}=T_{k}\circ v$ are continuous too and%
\[
\left\Vert S_{k}\right\Vert =\left\Vert T_{k}\circ v\right\Vert \leq\left\Vert
T_{k}\right\Vert \left\Vert v\right\Vert =\left\Vert \varphi\right\Vert
\left\Vert v\right\Vert \text{.}%
\]
Notice that $S_{k}\left(  x_{0}\right)  =w_{k}/\left\Vert w_{k}\right\Vert $
and, consequently,%
\[
\pi_{j_{0}+k}\circ S_{k}\not \equiv 0\text{.}%
\]
Hence, $S_{k}\in N$ for each $k\in\mathbb{N}$. It is obvious that the set
\[
\left\{  S_{k}:k\in\mathbb{N}\right\}  \subset N
\]
is linearly independent. Define%
\begin{align*}
\Psi\colon\ell_{1} &  \rightarrow\mathcal{L}\left(  V;E\right)  \\
\left(  a_{n}\right)  _{n=1}^{\infty} &  \mapsto%
{\textstyle\sum_{n=1}^{\infty}}
a_{n}S_{n}\text{.}%
\end{align*}
Since $\Psi$ is well-defined, linear and injective we have%
\[
\dim\Psi(\ell_{1})=\mathfrak{c}%
\]
and since $\Psi(\ell_{1})\subset N$ the proof of the case $\mathbb{N}_{v}%
\neq\mathbb{N}$ is done.

Now, let us suppose that $\mathbb{N}_{v}=\mathbb{N}$. By (\ref{b1b1b1}) we
know that, for each $\left(  \alpha_{n}\right)  _{n=1}^{\infty}\in\ell
_{\infty}$,%
\begin{align*}
S_{\left(  \alpha_{n}\right)  _{n=1}^{\infty}}^{v}\colon V  &  \rightarrow E\\
x  &  \mapsto\left(  \alpha_{n}\left(  v\left(  x\right)  \right)
_{n}\right)  _{n=1}^{\infty}%
\end{align*}
is a well-defined continuous linear operator. It is plain that $S_{\left(
\alpha_{n}\right)  _{n=1}^{\infty}}^{v}\in D_{V,E}$ whenever $\left(
\alpha_{n}\right)  _{n=1}^{\infty}$ is a sequence in $\ell_{\infty}$ having
some null entry (because, if $\alpha_{i}=0$, then the $i$-th coordinate of
$S_{\left(  \alpha_{n}\right)  _{n=1}^{\infty}}^{v}\left(  x\right)  $ is zero
for all $x\in V$). Let us consider, therefore, $\left(  \alpha_{n}\right)
_{n=1}^{\infty}\in\ell_{\infty}$ such that $\alpha_{n}\neq0$ for all
$n\in\mathbb{N}$ and fix $w=\left(  w_{n}\right)  _{n=1}^{\infty}\in
E\backslash v\left(  V\right)  $. Since $\left(  \alpha_{n}w_{n}\right)
_{n=1}^{\infty}\in E$, we have
\[
\left(  \alpha_{n}w_{n}\right)  _{n=1}^{\infty}\in E\backslash S_{\left(
\alpha_{n}\right)  _{n=1}^{\infty}}^{v}\left(  V\right)  \text{.}%
\]
In fact, if there was $x\in V$ such that $S_{\left(  \alpha_{n}\right)
_{n=1}^{\infty}}^{v}\left(  x\right)  =\left(  \alpha_{n}w_{n}\right)
_{n=1}^{\infty}$, we would have%
\[
\left(  \alpha_{n}w_{n}\right)  _{n=1}^{\infty}=\left(  \alpha_{n}\left(
v\left(  x\right)  \right)  _{n}\right)  _{n=1}^{\infty}%
\]
and, since $\alpha_{n}\neq0$ for all $n\in\mathbb{N}$, we would have $v\left(
x\right)  =w$, which is impossible. Hence, $S_{\left(  \alpha_{n}\right)
_{n=1}^{\infty}}^{v}\in D_{V,E}$.

Now consider the linear map%
\begin{align*}
\Lambda\colon\ell_{\infty}  &  \rightarrow\mathcal{L}\left(  V;E\right) \\
\Lambda\left(  \left(  \mu_{n}\right)  _{n=1}^{\infty}\right)   &  =S_{\left(
\mu_{n}\right)  _{n=1}^{\infty}}^{v}\text{.}%
\end{align*}
We have just proved that $S_{\left(  \alpha_{n}\right)  _{n=1}^{\infty}}%
^{v}\in D_{V,E}$ for all $\left(  \alpha_{n}\right)  _{n=1}^{\infty}\in
\ell_{\infty}$; thus
\[
\Lambda\left(  \ell_{\infty}\right)  \subset D_{V,E}.
\]
Note that $\Lambda$ is injective. In fact, if $\left(  \mu_{n}\right)
_{n=1}^{\infty}\in\ell_{\infty}$ and $\Lambda\left(  \left(  \mu_{n}\right)
_{n=1}^{\infty}\right)  =0$, since $\mathbb{N}_{v}=\mathbb{N}$, it follows
that, for all $k\in\mathbb{N}$, there is $x^{\left(  k\right)  }\in V$ such
that $\left(  v\left(  x^{\left(  k\right)  }\right)  \right)  _{k}\neq0$.
However, the $k$-th coordinate of $S_{\left(  \mu_{n}\right)  _{n=1}^{\infty}%
}^{v}\left(  x^{\left(  k\right)  }\right)  $ is $\mu_{k}\left(  v\left(
x^{\left(  k\right)  }\right)  \right)  _{k}$, which must be null and,
consequently, $\mu_{k}=0$ for all $k$ and $\left(  \mu_{n}\right)
_{n=1}^{\infty}=0$.

Observe that, choosing $\left(  \lambda_{n}\right)  _{n=1}^{\infty}=\left(
1,1,1,\ldots\right)  \in\ell_{\infty}$, then $v=\Lambda\left(  \left(
\lambda_{n}\right)  _{n=1}^{\infty}\right)  \in\Lambda\left(  \ell_{\infty
}\right)  $. Since $\Lambda$ is injective, we have%
\[
\dim\left(  \Lambda\left(  \ell_{\infty}\right)  \right)  =\dim\left(
\ell_{\infty}\right)  =\mathfrak{c}\text{,}%
\]
and the proof is done.

\section{Final remarks}

We finish this note by showing that the previous technique gives us the
following improvement of \cite[Theorem 3.1]{FPT}:

\begin{theorem}
Let $V\neq\left\{  0\right\}  $ be a normed vector space and $X\neq\left\{
0\right\}  $ be a Banach space. Let $E$ be a Banach sequence space such that
$c_{00}\left(  X\right)  \subset E$. If%
\[
A_{V,E}:=\left\{  u\colon V\rightarrow E:u\text{ is linear, continuous and
non-injective}\right\}  \neq\left\{  0\right\}
\]
then $A_{V,E}$ is $\left(  1,\beta\right)  $-spaceable, where $\beta
=\max\left\{  \mathfrak{c},\dim X\right\}  $.
\end{theorem}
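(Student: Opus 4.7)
The plan is to exhibit, for each fixed $v\in A_{V,E}\setminus\{0\}$, an explicit closed subspace of $\mathcal{L}(V;E)$ containing $v$, sitting inside $A_{V,E}\cup\{0\}$, and of dimension at least $\beta=\max\{\mathfrak{c},\dim X\}$. Because the hypothesis $A_{V,E}\neq\{0\}$ forces $\dim V\geq 2$ (a nonzero linear map on a one-dimensional space is automatically injective), $\ker v$ admits some $x_0\neq 0$, and the natural candidate is
\[
W:=\bigl\{u\in\mathcal{L}(V;E):u(x_0)=0\bigr\}.
\]
This $W$ is closed, as the kernel of the bounded evaluation $u\mapsto u(x_0)$; it contains $v$; and every nonzero $u\in W$ is non-injective because $x_0\neq 0$ lies in its kernel. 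So only the dimension bound $\dim W\geq\beta$ remains.

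For the dimension count I would run the same rank-one construction that appears in the proof of Theorem~\ref{Teo3.1}. Using Hahn--Banach on the nontrivial quotient $V/\mathbb{K}x_0$, fix $\psi\in V^{\ast}$ with $\psi(x_0)=0$ and some $y_0\in V$ with $\psi(y_0)=1$. For each $w\in E$ define $u_w\in\mathcal{L}(V;E)$ by $u_w(x)=\psi(x)\,w$. Then $u_w$ is bounded, $u_w(x_0)=0$ puts $u_w\in W$, and $u_w(y_0)=w$ makes the assignment $w\mapsto u_w$ a linear injection $E\hookrightarrow W$, so $\dim W\geq\dim E$. This step uses nothing beyond $c_{00}(X)\subset E$ and in particular bypasses the reasonable-sequence hypothesis, which is absent in the present theorem.

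Finally I confirm $\dim E\geq\beta$. The inclusion $c_{00}(X)\subset E$ embeds $X$ into $E$ via $x\mapsto(x,0,0,\ldots)$, so $\dim E\geq\dim X$; and for any fixed nonzero $a\in X$ the vectors $(\underbrace{0,\ldots,0}_{k-1},a,0,\ldots)$, $k\in\mathbb{N}$, form an infinite linearly independent family in $c_{00}(X)\subset E$, so $E$ is an infinite-dimensional Banach space and hence has algebraic dimension at least $\mathfrak{c}$ by the Baire category theorem. Combining, $\dim W\geq\dim E\geq\max\{\mathfrak{c},\dim X\}=\beta$, and the proof is complete. I foresee no genuine obstacle: the entire content is that selecting any nonzero $x_0\in\ker v$ reduces $(1,\beta)$-spaceability of $A_{V,E}$ at $v$ to the elementary lower bound $\dim E\geq\beta$ via the closed subspace $W$, with the two halves of $\beta$ absorbed uniformly through the single construction $w\mapsto \psi(\cdot)w$.
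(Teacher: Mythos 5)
Your proof is correct, and your subspace is in fact the same one the paper uses: since $u(x_{0})=u(y_{0})$ is equivalent to $u(x_{0}-y_{0})=0$, your $W$ coincides with the paper's $M:=\{u\in\mathcal{L}(V;E):u(x_{0})=u(y_{0})\}$ once you take your base point to be a nonzero element of $\ker v$. Where you genuinely diverge is the dimension count. The paper builds witnesses that factor through $v$: it chooses $\varphi\in E^{\ast}$ with $\varphi(v(\xi_{0}))=1$ and a Hamel basis $\{a_{\gamma}\}$ of $X$ (misstated there as a basis of $V$), forms the rank-one maps $T_{k}^{\gamma}(w)=\varphi(w)\,w_{k}^{\gamma}/\Vert w_{k}^{\gamma}\Vert$ with $w_{k}^{\gamma}=(0,\dots,0,a_{\gamma},0,\dots)$, and carries out a fairly laborious verification that the family $R_{k}^{\gamma}=T_{k}^{\gamma}\circ v$ is linearly independent in $M$; this only yields $\dim M\geq\max\{\aleph_{0},\dim X\}$, and the cardinal $\mathfrak{c}$ is recovered afterwards from the closedness of $M$ inside the Banach space $\mathcal{L}(V;E)$. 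You instead pick $\psi\in V^{\ast}$ annihilating $x_{0}$ and use the rank-one maps $\psi(\cdot)w$, $w\in E$, obtaining $\dim W\geq\dim E$ in one stroke and then bounding $\dim E\geq\max\{\mathfrak{c},\dim X\}$ directly from $c_{00}(X)\subset E$ together with the standard fact that an infinite-dimensional Banach space has algebraic dimension at least $\mathfrak{c}$. Your route is shorter, sidesteps the linear-independence computation entirely, and makes transparent that only $c_{00}(X)\subset E$ is needed; the small price is one extra Hahn--Banach application on the quotient $V/\mathbb{K}x_{0}$, which is available because a nonzero non-injective $v$ forces $\dim V\geq2$. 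Both arguments ultimately invoke the same Baire-type fact to reach $\mathfrak{c}$, applied to $E$ in your case and to the closed subspace $M$ of $\mathcal{L}(V;E)$ in the paper's.
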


\begin{proof}
Fixed $v\in A_{V,E}\backslash\left\{  0\right\}  $, let $x_{0},y_{0}\in V$,
with $x_{0}\neq y_{0}$, be such that%
\[
v(x_{0})=\left(  \left(  v\left(  x_{0}\right)  \right)  _{n}\right)
_{n=1}^{\infty}=\left(  \left(  v\left(  y_{0}\right)  \right)  _{n}\right)
_{n=1}^{\infty}=v(y_{0})\text{.}%
\]
It is obvious that the subspace%
\[
M:=\left\{  u\colon V\rightarrow E:u\text{ is linear, continuous and }u\left(
x_{0}\right)  =u\left(  y_{0}\right)  \right\}
\]
is contained in $A_{V,E}$ and $v\in M$. It is sufficient to show that $M$ is
closed and $\dim M\geq\beta$. Since $v$ is not identically zero, there exists
$\xi_{0}\in V$ such that $v\left(  \xi_{0}\right)  =w_{0}\neq0$. By the
Hahn-Banach Theorem, there is a continuous linear functional $\varphi\colon
E\rightarrow\mathbb{K}$, such that $\varphi\left(  w_{0}\right)  =1$. Let
$\left\{  a_{\gamma}:\gamma\in\Gamma\right\}  $ be a Hamel basis of $V$. For
each $\gamma\in\Gamma$ and each $k\in\mathbb{N}$, let us define%
\[
w_{k}^{\gamma}=(\underset{k-1}{\underbrace{0,\ldots,0}},a_{\gamma}%
,0,0,\ldots)\in E
\]
and consider the linear operators $T_{k}^{\gamma}\colon E\rightarrow E$ given
by%
\[
T_{k}^{\gamma}\left(  w\right)  =\varphi\left(  w\right)  \dfrac{w_{k}%
^{\gamma}}{\left\Vert w_{k}^{\gamma}\right\Vert }\text{.}%
\]
Obviously, the operators $T_{k}^{\gamma}$ are continuous and, thus, the
operators $R_{k}^{\gamma}=T_{k}^{\gamma}\circ v$ are continuous too. Notice
that $R_{k}^{\gamma}\left(  \xi_{0}\right)  =w_{k}^{\gamma}/\left\Vert
w_{k}^{\gamma}\right\Vert $ and, consequently,%
\[
\pi_{k}\circ R_{k}^{\gamma}\not \equiv 0
\]
and thus, $R_{k}^{\gamma}\in M$ for each $k\in\mathbb{N}$. Let us see that
\[
\left\{  R_{k}^{\gamma}:k\in\mathbb{N},\gamma\in\Gamma\right\}  \subset M
\]
is linearly independent. In fact, let $\left(  k_{1},\gamma_{1}\right)
,\ldots,\left(  k_{n},\gamma_{n}\right)  \in\mathbb{N}\times\Gamma$ be
pairwise distinct and let $\lambda_{1},\ldots,\lambda_{n}\in\mathbb{K}$ such
that%
\[
\lambda_{1}R_{k_{1}}^{\gamma_{1}}+\cdots+\lambda_{n}R_{k_{n}}^{\gamma_{n}%
}=0\text{.}%
\]
Then%
\[
0=\lambda_{1}R_{k_{1}}^{\gamma_{1}}\left(  \xi_{0}\right)  +\cdots+\lambda
_{n}R_{k_{n}}^{\gamma_{n}}\left(  \xi_{0}\right)  =\lambda_{1}\frac{w_{k_{1}%
}^{\gamma_{1}}}{\left\Vert w_{k_{1}}^{\gamma_{1}}\right\Vert }+\cdots
+\lambda_{n}\frac{w_{k_{n}}^{\gamma_{n}}}{\left\Vert w_{k_{n}}^{\gamma_{n}%
}\right\Vert }%
\]
and it is plain that $\lambda_{1}=\cdots=\lambda_{n}=0$ if $k_{i}\neq k_{j}$
whenever $i\neq j$, with $i,j\in\left\{  1,\ldots,n\right\}  $. With no lost
of generality, assuming that $k=k_{1}=\cdots=k_{p_{1}}$, $p_{1}\leq n$, and
$k_{i}\neq k$ if $i>p_{1}$, we have that the $k$-th coordinate of
\[
\lambda_{1}\frac{w_{k_{1}}^{\gamma_{1}}}{\left\Vert w_{k_{1}}^{\gamma_{1}%
}\right\Vert }+\cdots+\lambda_{n}\frac{w_{k_{n}}^{\gamma_{n}}}{\left\Vert
w_{k_{n}}^{\gamma_{n}}\right\Vert }%
\]
is%
\[
0=\frac{\lambda_{1}}{\left\Vert w_{k_{1}}^{\gamma_{1}}\right\Vert }%
a_{\gamma_{1}}+\cdots+\frac{\lambda_{p_{1}}}{\left\Vert w_{k_{p_{1}}}%
^{\gamma_{p_{1}}}\right\Vert }a_{\gamma_{p_{1}}}\text{.}%
\]
By hypothesis, $k_{1}=\cdots=k_{p_{1}}$ implies $\gamma_{1},\ldots
,\gamma_{p_{1}}$ pairwise distinct and, therefore, $a_{\gamma_{1}},\ldots,$
$a_{\gamma_{p_{1}}}$ are linearly independent. Hence, $\lambda_{1}%
=\cdots=\lambda_{p_{1}}=0$ and%
\[
0=\lambda_{p_{1}+1}\frac{w_{k_{p_{1}+1}}^{\gamma_{p_{1}+1}}}{\left\Vert
w_{k_{p_{1}+1}}^{\gamma_{p_{1}+1}}\right\Vert }+\cdots+\lambda_{n}%
\frac{w_{k_{n}}^{\gamma_{n}}}{\left\Vert w_{k_{n}}^{\gamma_{n}}\right\Vert
}\text{.}%
\]
Again, with no lost of generality, assuming that $k=k_{p_{1}+1}=\cdots
=k_{p_{2}}$, $p_{2}\leq n$, and $k_{i}\neq k$ if $i>p_{2}$, we have that the
$k$-th coordinate of
\[
\lambda_{p_{1}+1}\frac{w_{k_{p_{1}+1}}^{\gamma_{p_{1}+1}}}{\left\Vert
w_{k_{p_{1}+1}}^{\gamma_{p_{1}+1}}\right\Vert }+\cdots+\lambda_{n}%
\frac{w_{k_{n}}^{\gamma_{n}}}{\left\Vert w_{k_{n}}^{\gamma_{n}}\right\Vert }%
\]
is%
\[
0=\frac{\lambda_{p_{1}+1}}{\left\Vert w_{k_{p_{1}+1}}^{\gamma p_{1}%
+1}\right\Vert }a_{\gamma_{p_{1}+1}}+\cdots+\frac{\lambda_{p_{2}}}{\left\Vert
w_{k_{p_{2}}}^{\gamma_{p_{2}}}\right\Vert }a_{\gamma_{p_{2}}}\text{.}%
\]
By hypothesis, $k_{p_{1}+1}=\cdots=k_{p_{2}}$ implies $\gamma_{p_{1}+1}%
,\ldots,\gamma_{p_{2}}$ pairwise distinct and, therefore, $a_{\gamma_{p_{1}%
+1}},\ldots,$ $a_{\gamma_{p_{2}}}$ are linearly independent. Hence,
$\lambda_{p_{1}+1}=\cdots=\lambda_{p_{2}}=0$. Proceeding in this way, after
finitely many steps, or we get $\lambda_{1}=\cdots=\lambda_{n}=0$ or we obtain
$m<n$ such that $\lambda_{1}=\cdots=\lambda_{m}=0$ and $k_{m+1},\ldots,k_{n}$
are pairwise distinct. So we have%
\[
0=\lambda_{m+1}\frac{w_{k_{m+1}}^{\gamma_{m+1}}}{\left\Vert w_{k_{m+1}%
}^{\gamma_{m+1}}\right\Vert }+\cdots+\lambda_{n}\frac{w_{k_{n}}^{\gamma_{n}}%
}{\left\Vert w_{k_{n}}^{\gamma_{n}}\right\Vert }%
\]
and, as we know, this implies $\lambda_{m+1}=\cdots=\lambda_{n}=0$. Notice
that, at this moment, we have shown that%
\begin{equation}
\dim M\geq\max\left\{  \aleph_{0},\dim X\right\}  \text{.}\label{d1d1d1}%
\end{equation}
Supposing that $u$ lies in the closure of $M$, let $\left(  u_{n}\right)
_{n=1}^{\infty}$ be a sequence in $M$ such that $\lim\limits_{n\rightarrow
\infty}u_{n}=u$ (we are considering, as usual, the canonical $\sup$ norm in
the set of continuous linear operators $\mathcal{L}\left(  V;E\right)  $ from
$V$ to $E$). Since%
\[
u\left(  x_{0}\right)  =\lim_{n\rightarrow\infty}u_{n}\left(  x_{0}\right)
=\lim_{n\rightarrow\infty}u_{n}\left(  y_{0}\right)  =u\left(  y_{0}\right)
\text{,}%
\]
we conclude that $u\in M$. Thus, $M$ is closed in $\mathcal{L}\left(
V;E\right)  $ and $\dim M\geq\mathfrak{c}$. Since $\mathfrak{c}>\aleph_{0}$,
from here and (\ref{d1d1d1}) we get $\dim M\geq\beta$.
\end{proof}

We recall that, as commented in \cite{FPT} it is not true that $\left(
1,\mathfrak{c}\right)  $-lineability is inherited by inclusions. So, the
following result, which is proved by combinations of the previous techniques,
shall be noticed.

\begin{theorem}
Let $V$, $X$ and $E$ be as in Theorem \ref{Teo3.1}. The set%
\[
C_{V,E}:=\left\{  u\colon V\rightarrow E:u\text{ is linear, continuous,
non-surjective and non-injective}\right\}
\]
is $\left(  1,\mathfrak{c}\right)  $-lineable.
\end{theorem}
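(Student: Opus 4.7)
The plan is to handle the non-surjective and the non-injective constraints simultaneously by adding one single extra condition, $u(z_{0})=0$, to each of the two constructions in the proof of Theorem \ref{Teo3.1}. Since $v\in C_{V,E}\setminus\{0\}$ is non-injective, $\ker v$ is non-trivial; we fix once and for all $z_{0}\in\ker v\setminus\{0\}$. The key observation is that the operators produced in both cases of the proof of Theorem \ref{Teo3.1} are built from $v$ and therefore already vanish at $z_{0}$, so restricting to operators which annihilate $z_{0}$ costs us nothing.

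In the case $\mathbb{N}_{v}\neq\mathbb{N}$, with $j_{0}\in\mathbb{N}\setminus\mathbb{N}_{v}$, I would work inside the subspace
\[
W:=\{u\in\mathcal{L}(V;E):\pi_{j_{0}}\circ u\equiv 0\text{ and }u(z_{0})=0\}.
\]
Every nonzero element of $W$ is non-surjective (for $a\in X\setminus\{0\}$, the vector with $a$ at position $j_{0}$ and zero elsewhere lies in $c_{00}(X)\subset E$ but not in $u(V)$) and non-injective (as $z_{0}\in\ker u$ with $z_{0}\neq 0$), and clearly $v\in W$. The operators $S_{k}=T_{k}\circ v$ from the proof of Theorem \ref{Teo3.1} satisfy $\pi_{j_{0}}\circ S_{k}\equiv 0$ and $S_{k}(z_{0})=T_{k}(v(z_{0}))=0$, hence $S_{k}\in W$. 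The same map $\Psi\colon\ell_{1}\to\mathcal{L}(V;E)$ given by $\Psi((a_{n}))=\sum_{n=1}^{\infty}a_{n}S_{n}$ is a linear injection into $W$, so $\dim W\geq\mathfrak{c}$, and then $\mathrm{span}(v)+\Psi(\ell_{1})\subset W$ is a subspace of dimension exactly $\mathfrak{c}$ containing $v$.

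In the case $\mathbb{N}_{v}=\mathbb{N}$, I would reuse the map $\Lambda\colon\ell_{\infty}\to\mathcal{L}(V;E)$ from the proof of Theorem \ref{Teo3.1}, which is already shown there to be linear and injective, to satisfy $\Lambda(\ell_{\infty})\subset D_{V,E}$, and to contain $v=\Lambda((1,1,\ldots))$. The new ingredient is the identity
\[
\Lambda((\mu_{n}))(z_{0})=(\mu_{n}(v(z_{0}))_{n})_{n=1}^{\infty}=0,
\]
which shows that every nonzero operator in $\Lambda(\ell_{\infty})$ is also non-injective. Hence $\Lambda(\ell_{\infty})\subset C_{V,E}\cup\{0\}$ is a $\mathfrak{c}$-dimensional subspace containing $v$.

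The only potential obstacle is verifying that the two constructions of Theorem \ref{Teo3.1} survive the extra constraint $u(z_{0})=0$, but this is automatic: both produce operators that factor through $v$ (either as $T_{k}\circ v$ or as a coordinatewise modification of $v$), so each such operator inherits $\ker v$ and in particular kills $z_{0}$. No new analytic estimates are required beyond those already appearing in the proof of Theorem \ref{Teo3.1}.
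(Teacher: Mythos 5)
Your proof is correct and is exactly the ``combination of the previous techniques'' that the paper invokes without writing out: you impose the non-injectivity constraint $u(z_{0})=0$ (the paper's condition $u(x_{0})=u(y_{0})$ with $z_{0}=x_{0}-y_{0}$) on top of the two constructions from Theorem \ref{Teo3.1}, and correctly observe that this constraint is automatically satisfied because every operator built there factors through $v$. Nothing further is needed.
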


\end{document}